\tikzset{
    >=stealth',
    pil/.style={
           ->,
           thick,
           shorten <=2pt,
           shorten >=2pt,}
}
\tikzset{->-/.style={decoration={
  markings,
  mark=at position .7 with {\arrow{>}}},postaction={decorate}}}
  \tikzset{a/.style={decoration={
  markings,
  mark=at position .52 with {\arrow{angle 90}}},postaction={decorate}}}
\tikzset{-<-/.style={decoration={
  markings,
  mark=at position .4 with {\arrow{<}}},postaction={decorate}}}
\def\theoremhead@plain#1#2#3{%
  \theoremname{#1}\theoremnumber{\@ifnotempty{#1}{ }\@upn{#2}}%
  \theoremnote{ {\the\theorem@notefont#3}}}
\let\theoremhead\theoremhead@plain
\newcounter{AppCounter}
\def\restrict#1{\raise-.5ex\hbox{\ensuremath|}_{#1}}
\newtheorem{lemma}{Lemma}
\newtheorem{remark-definition}[lemma]{Remark-Definition}
\newtheorem{theorem}[lemma]{Theorem}
\newtheorem{proposition-conjecture}[lemma]{Proposition-conjecture}
\theoremstyle{definition}
\newtheorem{remark}[lemma]{Remark}
\newcommand{\R}{\mathbb R}
\newcommand{\C}{\mathbb C}
\begin{document}

\title{The Golden Ratio and Hydrodynamics}

\date{~}



\author[*]{Boris Khesin$^*$ and Hanchun Wang}
\affil[*]{Dept of Math, Univ of Toronto, ON M5S 2E4, Canada}

\maketitle

\vspace{-0.7in}
\epigraph{He [Radishchev] wanted simultaneously to write a subtle, graceful, and witty
prose, but also to serve his fatherland... For mixing the genres Radishchev
got jailed for ten years.}{Pyotr Vail and Alexander Genis, \textit{Native Speech}}



\begin{abstract}
There are useful and useless golden ratios. The useful one helps in traffic. The useless and rather mysterious one arises in hydrodynamics of point vortices, which we discuss in detail.
\end{abstract}



\bigskip



\subsection*{Mysterious Golden Ratios}

There are useful and useless golden ratios. We start with the former. 

The best way to convert miles to kilometers  is to take the next Fibonacci number. Indeed, 5 mi is 8 km, or 55 mi/h (the most important speed limit in the USA until 1995) is 89 km/h. Conversely, take the previous Fibonacci number: 34 km is 21 mi, or 
130 km/h (the speed limit in France) corresponds to 80 mi/h (one can always squeeze in extra zeros to the Fibonacci pair 13 and 8); see Figure \ref{speedometer}.

A justification for this simple rule is that in the Fibonacci sequence 
$$
1,1,2,3,5,8,13,21,34,55,89,144,\dots
$$
the ratio of two consecutive terms tends to the golden ratio (or golden section) 
$$
\phi=\frac{\sqrt 5 +1}{2}= 1.618034...
$$ as we go along the sequence.  The latter differs from the mile/kilometer factor $mi/km=1.6093$ by about $0.5\%$ (which is better than the precision of the police radar detector), hence the above mnemonic rule.

\medskip

There are many equivalent definitions of the golden ratio. The basic one is that this is the ratio of length to width for a rectangle, which preserves this ratio after cutting out the square; see Figure  \ref{rectangle}. Symbolically, $\phi:=a/b=
b/(a-b)$, which implies that $\phi$ is the positive root of the quadratic equation $\phi^2-\phi-1=0$.


\begin{figure}[!htb]
   \begin{minipage}{0.5\textwidth}
     \centering
\includegraphics[width=2.8in]{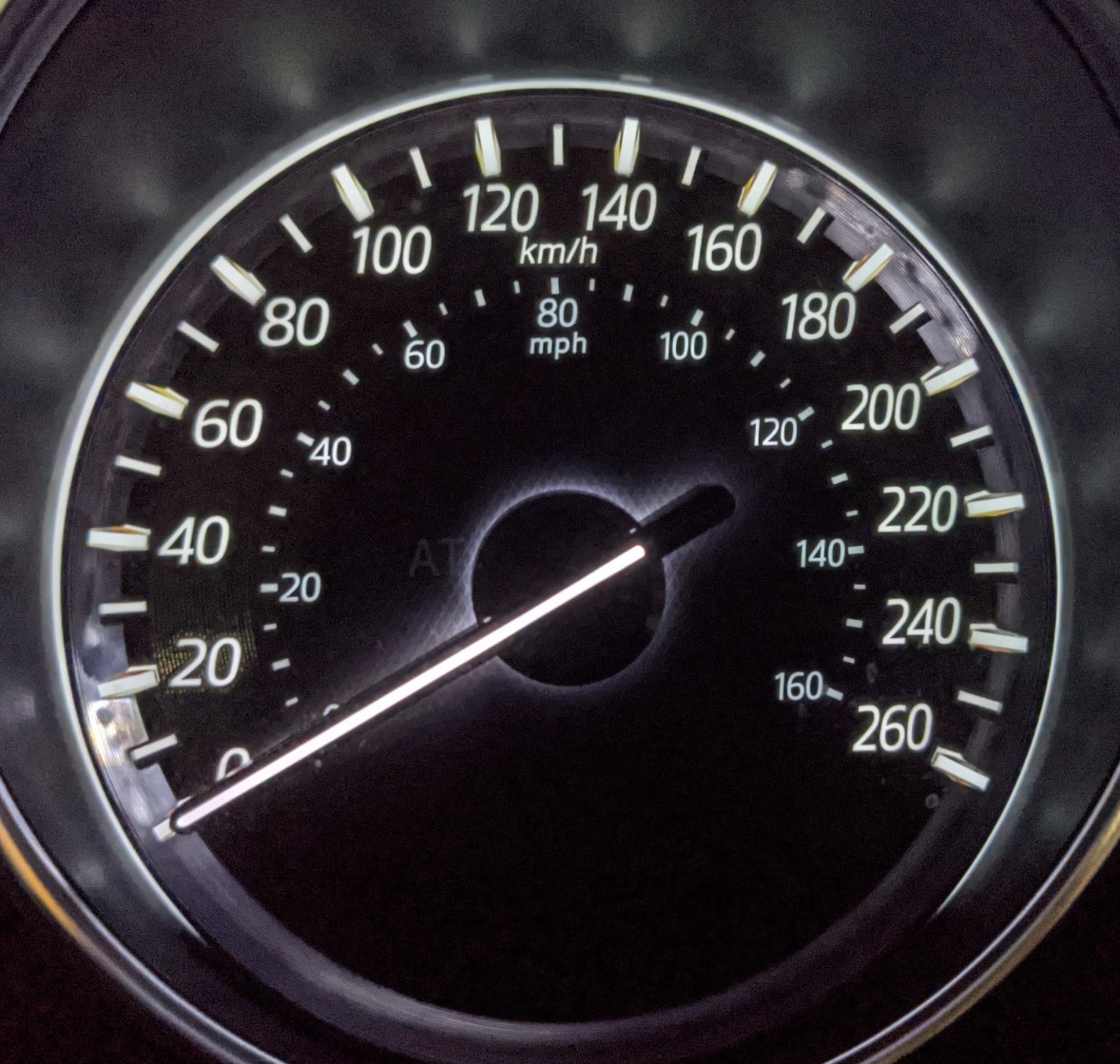}
\caption{\small The Mazda CX-5 speedometer with both $km/h$ and $mph$ scales.}
\label{speedometer}
   \end{minipage}\hfill
   \begin{minipage}{0.38\textwidth}
     \centering
\includegraphics[width=2.2in]{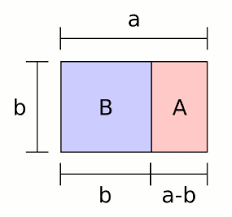}
\caption{\small The golden ratio $\phi=a/b$ is the ratio of length to width for such a special rectangle.}
\label{rectangle}
   \end{minipage}
\end{figure}

There are plenty of appearances and applications of the golden ratio in the medieval architecture, phyllotaxis, mollusk 
shells, one-dimensional dynamics, etc. Here we describe the recently discovered, rather mysterious, and, likely, most useless appearance of the golden ratio in 2D hydrodynamics.

\bigskip


\subsection*{Dynamics of point vortices}

The evolution of the earth atmosphere or oceans is often thought of as a motion of an inviscid incompressible
two-dimensional  fluid.  The classical Euler equation for such a motion can be written 
in the vorticity form $\partial_t \omega +L_v\omega=0$, describing that the fluid's vorticity $\omega:={\rm curl}\, v$ is transported by the fluid flow with velocity $v$. In two dimensions the vorticity $\omega$ is a function, and the Euler equation
assumes the form 
$$
\partial_t \omega =\frac{\partial \psi}{\partial x}\frac{\partial \omega}{\partial y}
-\frac{\partial \psi}{\partial y}\frac{\partial \omega}{\partial x}\,,
$$
where  the stream function $\psi$ of the flow satisfies 
$\Delta \psi=\omega$. 

This infinite-dimensional system in turn can be viewed as a limit of 
 dynamics of so-called point (or singular) vortices. A description of the 
 corresponding finite-dimensional dynamical system of vortices
in the plane goes back to Helmholtz and Kirchhoff; see e.g. \cite{Helmholtz, Kirchhoff, AK}. 
Namely, let the vorticity $\omega$ be  supported on $N$ point vortices $\omega=\sum^N_{j=1} \Gamma_j\,\delta( z- z_j)$, 
where $ z_j=( x_j,  y_j)$ are coordinates and $\Gamma_j$ is the strength of the $j$th point vortex in the plane $\R^{2}=\C$. Then 
the evolution of $N$  vortices according to the Euler equation is described by the  system
$$
\Gamma_j \dot x_{j}
=\frac{\partial \mathcal H}{\partial y_{j}},\qquad
\Gamma_j \dot y_{j}
=-\frac{\partial \mathcal H}{\partial x_{j}},\qquad 
1\le j\le N\,,
$$
for the function 
$$
\mathcal H(z_1, ..., z_N)=-\frac{1}{4\pi}\sum^N_{j<k}\Gamma_j\Gamma_k \, 
\ln | z_j- z_k |
$$
on $(\R^{2})^N$.
This system first appeared in this modern form in the 1876 Berlin lectures \cite{Kirchhoff} by Gustav Kirchhoff (1824-1887), 
who also found the system's three first integrals, related to its invariance with respect to the three-dimensional group $E(2)$ of motions of the plane. 

\begin{remark} 
The function $\mathcal H$ is the system's Hamiltonian
for the standard Poisson  bracket  on $\R^{2N}$ weighted by the strengths $\Gamma_j$.  
In addition to the Hamiltonian function, there are two first integrals in involution that are coming from Noether's symmetries.
This implies  the integrability for small number of vortices: it turns out that the systems of $N=1, 2$ and 3 point vortices in the plane $\R^2$  are 
completely integrable, while the systems of $N \ge 4$ point vortices for generic strengths are not; see e.g. \cite{AK}.

The cases of one and two point vortices were already studied in detail by Hermann von Helmholtz (1821-1894) some twenty years earlier,  in his 1858 paper \cite{Helmholtz}. He discovered that a single point vortex ($N=1$)  in the plane stays at rest, while a pair of point vortices ($N=2$)  rotates about their common center of vorticity  $z_c:=(\Gamma_1 z_1+ \Gamma_2 z_2)/(\Gamma_1+ \Gamma_2)$. For instance, the vortex pair with $\Gamma_1= \Gamma_2$
rotates about its midpoint, Figure \ref{dolzh_fig}a. For generic $\Gamma_1, \Gamma_2$ the rotation center is located on the line joining the vortices, and it is between the vortices provided that their strengths are of the same sign, and outside the segment joining them  if they are of different signs. In the case where the vortices form a dipole, i.e., 
a pair with $\Gamma_1=- \Gamma_2$,  they move uniformly along its perpendicular bisector, Figure \ref{dolzh_fig}b, 
and this can be regarded as a rotation about a center at infinity.
(The case of two point vortices is of particular interest in meteorology; see \cite{Dolzh}. For instance, the dipole setting corresponds to the cyclone--anticyclone pair moving in the zonal direction; see Figure \ref{cyclone}.)

\begin{figure}[hbtp]
\centering
\includegraphics[width=4.4in]{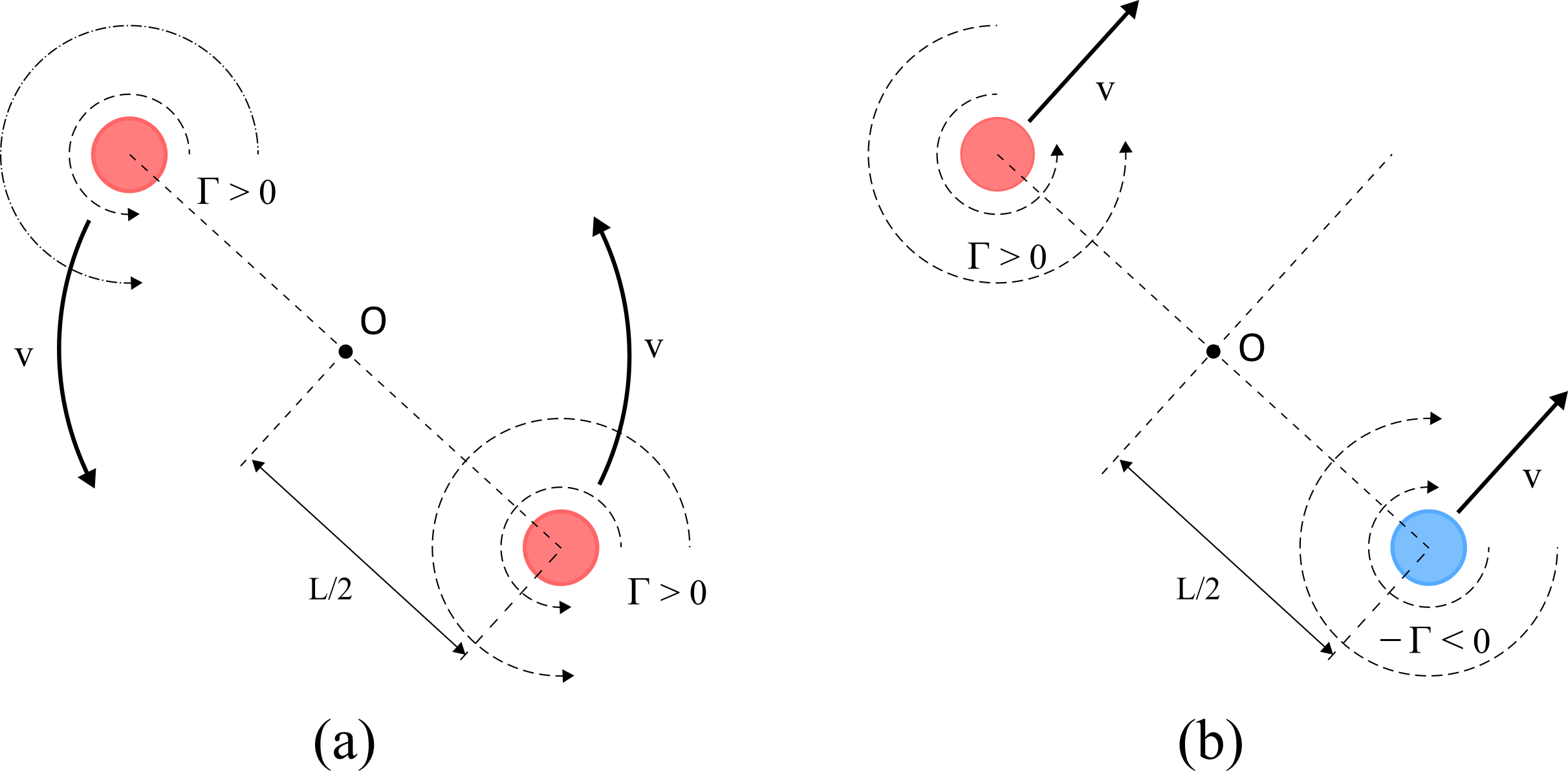}
\caption{\small Systems of two vortices: a) a pair of point vortices of equal strengths
rotates about their center, b) a dipole moves with constant velocity along its perpendicular bisector.}
\label{dolzh_fig}
\end{figure}

\begin{figure}[hbtp]
\centering
\includegraphics[width=4in]{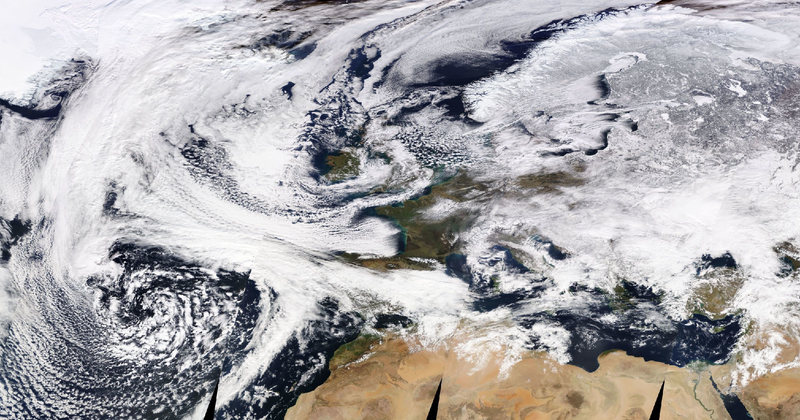}
\caption{\small The interaction of Cyclone Emma (nearing from the SW) and Anticyclone Hartmut (covering Europe from the NE) on February 27, 2018 (NASA, Wiki-Commons).}
\label{cyclone}
\end{figure}

The motion of three point vortices ($N=3$) is still integrable but can be much more elaborate, and
it was  described in detail by Walter Gr\"{o}bli in his dissertation in 1877 \cite{Grobli}. Gr\"obli was a student of Heinrich Weber and took courses of Kirchhoff, Helmholtz, Kummer, and Weierstrass in Berlin; see more details to this story in \cite {Aref}. 
In particular, he discovered that unlike the two-vortex case, three vortices allow self-similar collapsing solutions; see Figure~\ref{grobli_fig}.
(Note that  point vortices can never have pairwise collisions, even in many-vortex motions, since when only two point vortices approach each other, their interaction with other vortices is weakening, and they start behaving like a two-vortex system, therefore approximately rotating around their common vorticity center.) 

It is worth mentioning that unlike the famous three-body problem, which requires specifying both positions and velocities as its initial conditions, the three-vortex problem requires only vortex positions to determine the resulting trajectory.
\end{remark}

\begin{figure}[hbtp]
\centering
\includegraphics[width=3.2in]{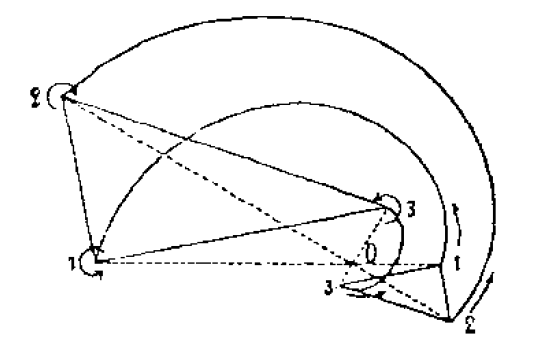}
\caption{\small Self-similar motion in which the vortex triangle changes its size but not its shape (a drawing from W.~Gr\"obli's 1877 dissertation). This self-similar expansion corresponds to vortices of strengths $\Gamma_1=3, \Gamma_2=-2$ and $\Gamma_3=6$; see \cite{Aref}.}
\label{grobli_fig}
\end{figure}

\begin{remark}
It is interesting to trace how the motion of vortices changes on domains with boundary.  
Already the half-plane brings a wider variety to this classical problem. Indeed, 
the motion of point vortices in a half-plane can be obtained by considering the auxiliary  reflected point vortices, so that 
the Green function for the Laplacian $\Delta \psi = \omega$ on the half-plane satisfies the zero boundary condition. An interesting feature of this motion is that, while a single vortex in the whole plane does not move, a single vortex is the half-plane moves parallel to the boundary with constant speed, in a sense ``interacting with the boundary". 
Indeed, its motion is equivalent to the motion of a dipole in $\R^2$ with auxiliary point vortex of opposite strength
reflected in the boundary, while the boundary becomes the perpendicular bisector for such a pair.

For two point vortices in the half-plane one observes many types of motions, some of which are of particular interest. For instance, it allows a leapfroging motion along the border, which is also related to the famous leapfrogging motion of axisymmetric vortex rings in 3D. We detail the motion of two point vortices in a half-plane  in the next section. 

It is worth pointing out that the system in the half-plane 
is invariant only under translations parallel to the boundary, and hence has only one 
Noether invariant. Thus this Hamiltonian system of point vortices in the half-plane is integrable  for $N=1$ and 2, while for $N=3$ it was recently proven to be non-integrable \cite{Cheng}.
\end{remark}


\bigskip


\subsection*{A pair of point vortices in the half-plane}

Here we will study the motion of two point vortices in the half-plane for two special but representative cases: a vortex pair or a dipole, which are two point vortices of the same absolute strengths and having, respectively, either the same or the opposite signs.

For two point vortices of any strengths $\Gamma_1$ and $\Gamma_2$ located 
at $z_1 = (x_1, y_1)$ and $z_2 = (x_2, y_2)$ in the half-plane the corresponding Hamiltonian
takes into account their interaction between themselves and with their mirror images $\bar z_1 = (x_1, -y_1)$ and $\bar z_2 = (x_2, -y_2)$ of strengths $-\Gamma_1$ and $-\Gamma_2$:
$$
\mathcal H(z_1, z_2)=\frac{1}{4\pi}\left( \Gamma_1^2\log  | z_1- \bar z_1|+\Gamma_2^2\log | z_2- \bar z_2|-2\Gamma_1\Gamma_2\log | z_1- z_2| +2 \Gamma_1\Gamma_2\log | z_1- \bar z_2| \right)\,.
$$
The Noether first integral is $P:=\Gamma_1 y_1+ \Gamma_2 y_2$, which corresponds to the system symmetry with respect to the $x$-translations. 

To study vortex bifurcations we normalize their strengths by setting $\Gamma_1=\Gamma_2=1$ for a vortex pair and $\Gamma_1=-\Gamma_2=1$
for a dipole.
Furthermore, introduce the following dimensionless parameter $W:=P^2\exp(-4\pi\mathcal H)$ 
measuring the vortex interaction. As we will see below, the increase of $W$ corresponds to the 
weakening of the interaction between the point vortices.

\begin{remark}
To see that this is indeed a dimensionless parameter, one observes that whatever units of length are used 
for $x$ and $y$, the value of $P^2$ is measured in $units^2$, while $\exp(-\mathcal H)$ is measured 
in $units^{-2}$, so that $W$ is dimensionless. More generally, before normalizations,
one can define dimensionless $W:=(P/\Gamma)^{2}\exp(-4\pi\mathcal H/\Gamma^2)$, where $\Gamma:=\Gamma_1=\pm\Gamma_2$.
\end{remark}

By changing this parameter one observes different types of behaviour for the corresponding vortex systems. As $W$ increases, the  interaction of vortices weakens: 
in a dipole, upon approaching one another 
the vortices either go to infinity at a slanted asymptote, or one of them makes a kink with the other before parting, or they pass each other at a social distance; see Figure \ref{gold_dipole_fig}.

\begin{figure}[hbtp]
\hspace*{-2cm}     
\includegraphics[width=7.6in]{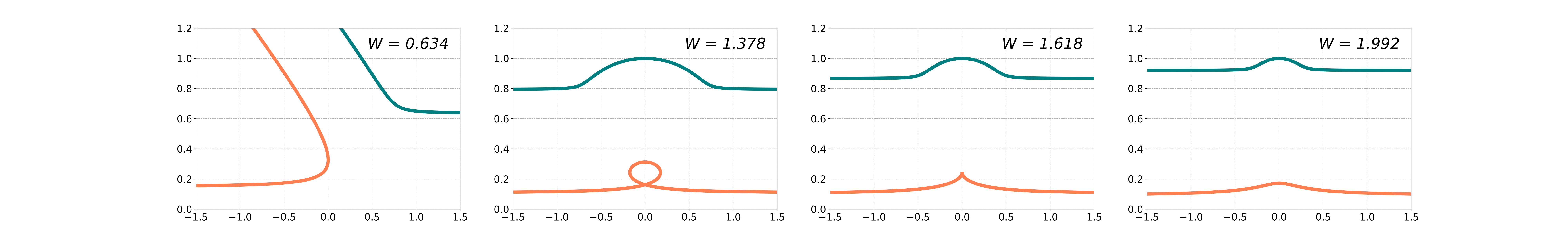}
\caption{\small As the interaction weakens, the vortices in the dipole either go to infinity, or one of them makes a kink, or they pass around each other. (The orange and aquamarine colors correspond to two  vortices 1 and 2).}
\label{gold_dipole_fig}
\end{figure}

For a vortex pair, a strong interaction is related to a leap-frogging motion of the vortices, while a weak one produces just two intertwining sinusoidal-like trajectories; see Figure \ref{gold_pair_fig}. In both cases the change from a kink/leap-frogging 
motion to a regular one happens through the cusp-type bifurcation for the vortex motion:

\begin{figure}[hbtp]
\hspace*{-2cm}     
\includegraphics[width=7.6in]{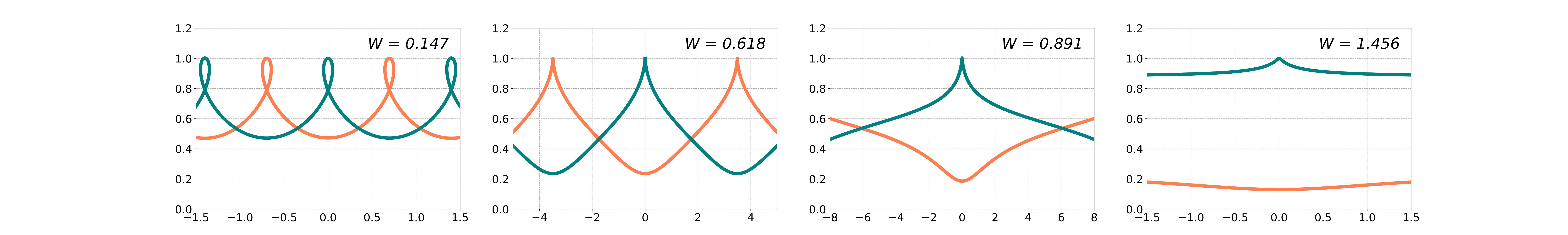}
\caption{\small For a vortex pair, as the interaction weakens, a leap-frogging motion of the vortices changes to intertwining sinusoidal-like trajectories via a cusp-type motion.}
\label{gold_pair_fig}
\end{figure}

\begin{theorem}\label{thm:3cases}
(i) For a vortex pair the leap-frogging vortex motion changes to the smooth intertwining one 
  through  the cusp bifurcation, which occurs at the reciprocal golden ratio value of its
 interaction parameter: $W=1/\phi$.
 
(ii) For a dipole the bifurcation from the kink motion to the laminar one
occurs through the cusp bifurcation, corresponding to the golden ratio value of its
 interaction parameter: $W=\phi$.

(iii) At the moment of cusp bifurcation the two vortices lie
on the same vertical. The cross-ratio of the four points $(z_1, z_2, \bar z_2, \bar z_1)$, that are these two vortices 
together with their mirror images, is equal to the golden ratio $\phi$ (in both cases, for a vortex pair and a dipole).
\end{theorem}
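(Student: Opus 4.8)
The plan is to work in the reduced coordinates given by the horizontal separation $a := x_1 - x_2$ and the two heights $y_1, y_2$, in which the $x$-translation symmetry is manifest and the conserved quantities take a transparent form. First I would substitute $|z_1 - \bar z_1| = 2y_1$, $|z_2 - \bar z_2| = 2y_2$, $|z_1 - z_2|^2 = a^2 + (y_1-y_2)^2$, and $|z_1 - \bar z_2|^2 = a^2 + (y_1+y_2)^2$ into $\mathcal H$, obtaining for the pair ($\Gamma_1 = \Gamma_2 = 1$)
$$
\exp(4\pi\mathcal H) = \frac{4 y_1 y_2\,(a^2 + (y_1+y_2)^2)}{a^2 + (y_1-y_2)^2},
$$
and the analogous expression with the roles of $(y_1-y_2)$ and $(y_1+y_2)$ interchanged for the dipole. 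Together with $P = y_1 + y_2$ (pair) or $P = y_1 - y_2$ (dipole) this yields closed formulas for $W = P^2 \exp(-4\pi\mathcal H)$ in the three variables $a, y_1, y_2$.

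Next I would locate the cusp. A cusp in the lab-frame trajectory of a vortex is precisely an instant at which its velocity $(\dot x_j, \dot y_j)$ vanishes. Writing out Hamilton's equations in these coordinates, the vertical velocity takes the form $\dot y_j = c\,a\,(\cdots)$ with $c\neq 0$ and the bracketed factor strictly positive, so $\dot y_j = 0$ forces $a = 0$: any cusp must occur when the two vortices lie on a common vertical, which already gives the first assertion of part (iii). Imposing in addition $\dot x_j = 0$ at $a = 0$ reduces, after clearing denominators, to the single quadratic
$$
y_1^2 - 4 y_1 y_2 - y_2^2 = 0,
$$
identically in both the pair and dipole cases. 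In terms of $r := y_1/y_2$ this is $r^2 - 4r - 1 = 0$, whose positive root is $r = 2 + \sqrt 5 = \phi^3$ (using $\phi^2 = \phi + 1$).

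The conclusions then follow by substitution. At $a = 0$ one has $W = (r-1)^2/(4r)$ for the pair and $W = (r+1)^2/(4r)$ for the dipole; plugging in $r = \phi^3$ together with $r - 1 = 2\phi$ and $r + 1 = 2\phi^2$ gives $W = 4\phi^2/(4\phi^3) = 1/\phi$ and $W = 4\phi^4/(4\phi^3) = \phi$ respectively, establishing (i) and (ii). For the remaining part of (iii), at $a = 0$ the four points $z_1, z_2, \bar z_2, \bar z_1$ are collinear with coordinates $y_1, y_2, -y_2, -y_1$ along the vertical, so their cross-ratio is $\frac{(y_1+y_2)^2}{4 y_1 y_2} = (r+1)^2/(4r) = \phi$; note this depends only on the configuration $r$, not on the strengths, which is why the same golden value appears for both the pair and the dipole.

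The differentiation and algebra above are routine; the genuine content, and the step I expect to be the main obstacle, is justifying that the vanishing of the velocity at the $a = 0$ crossing is exactly the bifurcation separating the two motion regimes rather than an incidental critical point. For this I would analyze the sign of $\dot x_j$ at the $a = 0$ crossing as a function of the conserved $W$: for $W$ on one side the horizontal velocity keeps a fixed sign throughout the motion (the smooth intertwining/laminar trajectories), while on the other side it reverses near $a = 0$, producing the backward loop responsible for the leap-frogging or the kink, and the borderline $\dot x_j = 0$ is the cusp. Making this monotonicity-versus-loop dichotomy precise — ideally by expanding the trajectory locally near the $a = 0$ crossing and reading off the cusp as the degenerate case — is where the real work lies.
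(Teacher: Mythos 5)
Your proposal is correct and follows essentially the same route as the paper: its two lemmas are exactly your steps (the velocity is horizontal only when $x_1=x_2$, and the stop condition yields the quadratic for $r=y_1/y_2$), followed by the same substitutions into $W$ and the cross-ratio; the only discrepancy is that for the dipole the paper's quadratic is $y_1^2-y_2^2+4y_1y_2=0$ with root $r=\sqrt5-2=1/\phi^3$ rather than your $r^2-4r-1=0$, which is harmless since $W=(r\pm1)^2/(4r)$ and the cross-ratio are invariant under the relabeling $r\mapsto 1/r$. The ``main obstacle'' you flag --- showing the instantaneous stop is genuinely the regime-separating bifurcation rather than an incidental critical point --- is likewise not rigorously established in the paper, which simply identifies the cusp with the vanishing-velocity instant via the generic normal form $(t^3,t^2)$ and illustrates the two regimes pictorially.
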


Before we prove this theorem we make several  observations for general strengths $\Gamma_1$ and $\Gamma_2$
of two vortices in the half-plane. First of all, note that the cusp in a generic parametrized curve $t\mapsto (x(t), y(t))$
in the plane  corresponds to the vanishing derivative, $(\dot  x(t), \dot y(t))=(0,0)$, 
an instantaneous  stop. (Indeed, a generic curve 
$(x(t), y(t))=(a_0 +a_2 t^2 + a_3 t^3 +..., b_0 +b_2 t^2 + b_3 t^3 +...)$ with vanishing derivative at $t=0$ can be transformed by local coordinate changes to 
$(x(t), y(t))=(t^3, t^2)$, a normal form of a cusp.)

Let us rewrite the general two-vortex Hamiltonian in a more explicit coordinate form:
$$
\mathcal H(z_1, z_2)=\frac{1}{4\pi}\log \left( (2y_1)^{\Gamma_1^2} (2y_2)^{\Gamma_2^2} 
\left(\frac{(x_1-x_2)^2 +(y_1+y_2)^2}{(x_1-x_2)^2 +(y_1-y_2)^2}    \right)^{\Gamma_1\Gamma_2} \right)\,.
$$
Recall one useful property of such a
system: it does not allow collisions of vortices between themselves and with the boundary \cite{Cheng}. 
We start with the following auxiliary statement.

\begin{lemma}
For a pair of vortices in the half plane, the velocity of either of the vortices is horizontal whenever the two vortices turn out to be on the same vertical, $x_1=x_2$.
\end{lemma}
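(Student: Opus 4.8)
The plan is to read the velocity directly off Hamilton's equations and to exploit that the two $x$-coordinates enter the Hamiltonian only through the symmetric combination $(x_1-x_2)^2$. Recall that the velocity of vortex $j$ is $(\dot x_j,\dot y_j)$, governed by $\Gamma_j\dot x_j=\partial\mathcal H/\partial y_j$ and $\Gamma_j\dot y_j=-\partial\mathcal H/\partial x_j$. Since $\Gamma_j\neq 0$, this velocity is horizontal, i.e. $\dot y_j=0$, exactly when $\partial\mathcal H/\partial x_j=0$. Hence it suffices to show that both $\partial\mathcal H/\partial x_1$ and $\partial\mathcal H/\partial x_2$ vanish on the locus $x_1=x_2$.

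First I would use the explicit coordinate form of $\mathcal H$ recorded above. The two factors $(2y_1)^{\Gamma_1^2}$ and $(2y_2)^{\Gamma_2^2}$ are independent of $x_1,x_2$ and so drop out of the $x$-derivatives; all the $x$-dependence sits in the quotient, where $x_1,x_2$ appear solely via $(x_1-x_2)^2$. Differentiating by the chain rule therefore pulls out an overall factor of $(x_1-x_2)$ from each derivative; explicitly,
\[
\frac{\partial\mathcal H}{\partial x_1}=-\frac{\partial\mathcal H}{\partial x_2}=\frac{\Gamma_1\Gamma_2}{2\pi}\,(x_1-x_2)\left(\frac{1}{(x_1-x_2)^2+(y_1+y_2)^2}-\frac{1}{(x_1-x_2)^2+(y_1-y_2)^2}\right).
\]
Setting $x_1=x_2$ kills the prefactor $(x_1-x_2)$, so both partial derivatives vanish, giving $\dot y_1=\dot y_2=0$, and the velocities are horizontal.

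There is no real obstacle here: the lemma is essentially a one-line consequence of the $(x_1-x_2)^2$-symmetry of $\mathcal H$. The only point worth a moment's care is to check that the bracketed factor stays finite as $x_1\to x_2$, so that the vanishing of $(x_1-x_2)$ genuinely forces the derivative to zero. On the locus $x_1=x_2$ the two denominators equal $(y_1+y_2)^2$ and $(y_1-y_2)^2$, and both are positive: since the vortices stay strictly inside the half-plane and never collide, one has $y_1,y_2>0$ and $y_1\neq y_2$ whenever $x_1=x_2$. This uses the no-collision property cited above and completes the argument.
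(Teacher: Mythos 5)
Your proof is correct and follows essentially the same route as the paper: differentiate the explicit coordinate form of $\mathcal H$ with respect to $x_j$, factor out $(x_1-x_2)$, and invoke the no-collision property to ensure the remaining factor is finite. The only cosmetic difference is that the paper combines the two fractions into a single one with numerator proportional to $(x_1-x_2)\,y_1y_2$, which gives the slightly stronger ``if and only if'' statement ($\dot y_1=0$ iff $x_1=x_2$) for free, whereas your version proves exactly the stated direction.
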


\begin{proof}
For instance  for vortex $z_1$ we obtain:
$$
\dot y_1=-\frac{1}{\Gamma_1} \frac{\partial \mathcal H}{\partial x_1}
=-\frac{\Gamma_2}{4\pi}\left( \frac{2(x_1-x_2)}{(x_1-x_2)^2+(y_1+y_2)^2}-\frac{2(x_1-x_2)}{(x_1-x_2)^2+(y_1-y_2)^2 } \right)
$$
$$
=\frac{2\Gamma_2}{\pi} \frac{(x_1-x_2)y_1y_2}{\left((x_1-x_2)^2+(y_1+y_2)^2\right)\left((x_1-x_2)^2+(y_1-y_2)^2\right)}\,.
$$
Since the vortices  in the half-plane never hit the boundary,  one has $y_1 y_2\not=0$, which implies that the velocity 
of $z_1=(x_1, y_1)$ is horizontal, i.e., 
$\dot y_1=0$ iff $x_1-x_2=0$.
\end{proof}

Note at the moment when the line connecting the two vortices becomes vertical
it passes through their mirror images as well.

\begin{lemma}
At the moment of instantaneous stop, the heights of the vortices are related by a factor of $2\lambda+\sqrt{4\lambda^2+1}$, i.e., $y_1=(2\lambda+\sqrt{4\lambda^2+1})y_2$, where $\lambda:=\Gamma_2/\Gamma_1$.
\end{lemma}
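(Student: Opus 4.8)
The plan is to characterize the instantaneous stop of a single designated vortex, say $z_1$, as the simultaneous vanishing of both components of its velocity, $\dot x_1=\dot y_1=0$, and then extract the height relation algebraically. The previous lemma already disposes of the vertical component: $\dot y_1=0$ is equivalent to the two vortices lying on a common vertical, $x_1=x_2$. Hence the entire content of the statement comes from the single remaining equation $\dot x_1=0$ evaluated on the locus $x_1=x_2$.

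First I would write $\dot x_1=\frac{1}{\Gamma_1}\frac{\partial \mathcal H}{\partial y_1}$ from Kirchhoff's equations and differentiate the explicit coordinate form of $\mathcal H$. The $\log(2y_1)$ term contributes $\Gamma_1^2/y_1$, while the interaction term contributes $\Gamma_1\Gamma_2\big(\frac{2(y_1+y_2)}{(x_1-x_2)^2+(y_1+y_2)^2}-\frac{2(y_1-y_2)}{(x_1-x_2)^2+(y_1-y_2)^2}\big)$. Imposing $x_1=x_2$ collapses each denominator to a perfect square, so the bracket becomes $\frac{2}{y_1+y_2}-\frac{2}{y_1-y_2}$; combining over the common denominator $y_1^2-y_2^2$ simplifies the interaction contribution to $-4\Gamma_1\Gamma_2\,y_2/(y_1^2-y_2^2)$.

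Setting $\dot x_1=0$ and cancelling an overall $\Gamma_1$ then yields $\Gamma_1(y_1^2-y_2^2)=4\Gamma_2\,y_1 y_2$. Dividing by $\Gamma_1 y_2^2$ and writing $r:=y_1/y_2$ together with $\lambda=\Gamma_2/\Gamma_1$, this is the quadratic $r^2-4\lambda r-1=0$, whose roots are $r=2\lambda\pm\sqrt{4\lambda^2+1}$. The final step is root selection: by Vieta the product of the two roots equals $-1$, so exactly one is positive, and since $y_1,y_2>0$ in the open half-plane we need $r>0$, forcing $r=2\lambda+\sqrt{4\lambda^2+1}$, which is the claimed relation $y_1=(2\lambda+\sqrt{4\lambda^2+1})\,y_2$.

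The computation is elementary; the one delicate algebraic point is the simplification of the interaction term at $x_1=x_2$, where a sign slip would flip the quadratic. The more conceptual point I would make explicit is that the statement refers to the stop of the one vortex $z_1$: the symmetric condition $\dot x_2=0$ gives the different quadratic $\lambda r^2+4r-\lambda=0$ (the reciprocal-type equation obtained under $1\leftrightarrow 2$, i.e. $r\mapsto 1/r$, $\lambda\mapsto 1/\lambda$), so the two vortices do not in general come to rest at the same instant, and the height ratio $2\lambda+\sqrt{4\lambda^2+1}$ is the one attached to the cusp of $z_1$.
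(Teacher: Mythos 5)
Your proposal is correct and follows essentially the same route as the paper: compute $\dot x_1=\frac{1}{\Gamma_1}\frac{\partial\mathcal H}{\partial y_1}$, impose $x_1=x_2$ from the previous lemma so the interaction term collapses to $-4\Gamma_1\Gamma_2 y_2/(y_1^2-y_2^2)$, arrive at the quadratic $\Gamma_1 y_1^2-\Gamma_1 y_2^2-4\Gamma_2 y_1y_2=0$, and select the positive root $r=2\lambda+\sqrt{4\lambda^2+1}$ since $y_1,y_2>0$. Your closing observation that the lemma concerns the stop of the designated vortex $z_1$ (with $\dot x_2=0$ yielding the distinct quadratic $\lambda r^2+4r-\lambda=0$, so the two vortices never stop simultaneously) is a correct and welcome clarification that the paper leaves implicit.
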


\begin{proof}
Indeed, for the vortex $z_1$ we have:
$$
\dot x_1=\frac{1}{\Gamma_1} \frac{\partial \mathcal H}{\partial y_1}
=\frac{\Gamma_1}{4\pi y_1}+\frac{\Gamma_2}{4\pi}
\left( \frac{2(y_1+y_2)}{(x_1-x_2)^2+(y_1+y_2)^2}-\frac{2(y_1-y_2)}{(x_1-x_2)^2+(y_1-y_2)^2 } \right)\,.
$$
Upon plugging in the condition $x_1-x_2=0$, which ensures $\dot y_1=0$, we obtain
$$
\dot x_1=\frac{\Gamma_1}{4\pi y_1}-\frac{\Gamma_2 y_2}{\pi (y_1^2- y_2^2)}=\frac{1}{4\pi}\frac{\Gamma_1 y_1^2-\Gamma_1 y_2^2 - 4\Gamma_2 y_1 y_2}{y_1(y_1^2-y_2^2)}\,.
$$
The denominator can never be zero, due to the absence of collisions between vortices and with the boundary.
Equating the numerator to zero and solving the corresponding equation $\Gamma_1 y_1^2-\Gamma_1 y_2^2 - 4\Gamma_2 y_1 y_2=0$ for a positive root, since both $y_1$ and $y_2$ are positive, we come to $y_1/y_2=2\lambda+\sqrt{4\lambda^2+1}$ with $\lambda:=\Gamma_2/\Gamma_1$. 
\end{proof}

Now we will set $\Gamma_1=\pm \Gamma_2$ and prove the theorem for a dipole and a vortex pair.

\begin{proof}
For a dipole $\lambda=-1$, and hence $y_1=(-2+\sqrt{5})y_2$ and $y_1^2-y_2^2+4y_1y_2=0$. Then 
$$
W=P^2\exp(-4\pi\mathcal H)=\frac{(y_1-y_2)^2(y_1+y_2)^2}{4y_1y_2(y_1-y_2)^2}=\frac{y_1+y_2}{y_2-y_1}
=\frac{-1+\sqrt 5}{3-\sqrt 5}=\frac{1+\sqrt 5}{2}=\phi\,.
$$ 
The computation with $\lambda=1$ for a vortex pair  is similar.

The cross-ratio  of four points $(y_1, y_2, y_3, y_4)$ on a line 
is defined as  
$$
{\rm CR}(y_1, y_2, y_3, y_4)=\frac{(y_1-y_4)(y_2-y_3)}{(y_1-y_2)(y_3-y_4)}\,.
$$
One immediately obtains that, e.g., ${\rm CR}(2+\sqrt 5,\, 1, -1, -2-\sqrt 5)=(1+\sqrt 5)/2=\phi$.
\end{proof}

\begin{remark}
For a general strength ratio $\lambda:=\Gamma_2/\Gamma_1$ the dimensionless interaction
parameter is $W(\lambda):=|P/\Gamma|^{1+\lambda^2}\exp(-4\pi \mathcal H/\Gamma^2)$.
By plugging in it the relation of instantaneous stop from the two lemmas above, one can obtain 
the whole bifurcation diagram of this system. 
\end{remark}

Returning to the case of $\lambda=\pm1$ we recall
 that at the moment of instantaneous stop both point vortices, and hence their mirror images, lie on the same vertical. 
The cusp itself is related to the fact that at a specific  value of $W$, controlling the interconnection, a dominant interaction between the vortices gets overridden by the interaction with their mirrors. This balanced proportion of distances between the vortices on the vertical line leads to the golden ratio due to the following more general observation.

\medskip

Consider a point $E=1$ on the real axis and an action of three other points on it.
We will be looking for a point $A>1$ satisfying the following condition: its 
``action" on the point $E$ combined with the action of  its mirror point $-A$ on $E$ is balanced by the action of the mirror image $-E$ on $E$ itself.

\begin{lemma}
Assume that the action is inverse proportional to the distance between the points. 
Then the cross-ratio of points $(A, E, -E, -A)$ satisfying the balance of actions is equal to the golden ratio:
$$
{\rm CR}(A, E, -E, -A)=\phi\,.
$$
\end{lemma}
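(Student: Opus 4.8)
The plan is to turn the word ``action'' into a single algebraic equation and then feed its solution into the cross-ratio formula. Since the action is inversely proportional to distance, the action of $A$ on $E=1$ is $1/|A-1|=1/(A-1)$ (recall $A>1$), the action of the mirror point $-A$ on $E$ is $1/|{-A}-1|=1/(A+1)$, and the action of $-E=-1$ on $E$ is $1/|{-1}-1|=1/2$. The balance condition---the combined action of $A$ and $-A$ being matched by that of $-E$---therefore reads
$$
\frac{1}{A-1}+\frac{1}{A+1}=\frac12 .
$$
First I would clear denominators to get $2A/(A^2-1)=1/2$, equivalently the quadratic $A^2-4A-1=0$, and select the root with $A>1$, namely $A=2+\sqrt5$ (which is incidentally $\phi^3$); the other root $2-\sqrt5<0$ is discarded.

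Second, I would compute the cross-ratio directly from the paper's formula with $(y_1,y_2,y_3,y_4)=(A,1,-1,-A)$:
$$
{\rm CR}(A,1,-1,-A)=\frac{(A-(-A))(1-(-1))}{(A-1)(-1-(-A))}=\frac{(2A)(2)}{(A-1)(A-1)}=\frac{4A}{(A-1)^2}.
$$
Rather than substitute the surd immediately, the cleanest route is to reuse the balance equation itself: clearing $\frac{1}{A-1}+\frac{1}{A+1}=\frac12$ shows it is exactly $(A-1)(A+1)=4A$, so the numerator $4A$ can be replaced by $(A-1)(A+1)$. This collapses the cross-ratio to $(A+1)/(A-1)$, and plugging in $A=2+\sqrt5$ (then rationalizing) yields $(\sqrt5+1)/2=\phi$.

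The only real obstacle is interpretive rather than computational: pinning down the intended meaning of ``combined'' and ``balanced by'' so that the equation is $\frac{1}{A-1}+\frac{1}{A+1}=\frac12$ and not a variant carrying a minus sign (which would instead give $A=\sqrt5$ and a cross-ratio $\tfrac{3\sqrt5+5}{2}\neq\phi$). I would justify the chosen reading by checking consistency with the physical balance derived earlier: the instantaneous-stop condition $\dot x_1=0$ for the vortex pair, written on the vertical line $x_1=x_2$, produces exactly the ratio $y_1/y_2=2+\sqrt5$ recovered here, so identifying $E$ with the lower vortex and $A$ with the upper one makes this lemma the clean abstract shadow of Theorem \ref{thm:3cases}. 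Once the equation is fixed, everything else is a two-line calculation.
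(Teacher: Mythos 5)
Your proposal is correct and follows essentially the same route as the paper: the same balance equation $\frac{1}{A-1}+\frac{1}{A+1}=\frac12$, the same quadratic $A^2-4A-1=0$ with positive root $A=2+\sqrt5$, and then the cross-ratio evaluation (which the paper delegates to the computation in Theorem \ref{thm:3cases}$(iii)$, namely ${\rm CR}(2+\sqrt5,\,1,\,-1,\,-2-\sqrt5)=\phi$). Your only addition is the tidy observation that the balance equation is equivalent to $(A-1)(A+1)=4A$, collapsing the cross-ratio to $(A+1)/(A-1)$ before substituting the surd --- a nice streamlining, but not a different argument.
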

 
 \begin{proof}
 The balance of actions reduces to the equation
 $$
 \frac{1}{A-1} +\frac{1}{A+1}=\frac 12\,,
 $$
 where $1/2$ arises as the reciprocal of the distance between $E=1$ and $-E=-1$.
 Then $A$ satisfies the quadratic equation $A^2-4A-1=0$, and therefore $A=2\pm\sqrt 5$.
 By choosing the positive root we come to the same cross-ratio as 
 in Theorem \ref{thm:3cases}$(iii)$.
 \end{proof}
 
 For the system of point vortices  this ``action" is the speed that the vortices 
induce on one another, while their balance means the instantaneous stop. The induced speed is
indeed inverse proportional to the distance: 
if the vorticity is $\omega=\delta(z)$ on ${\mathbb C}={\mathbb R}^2$, 
then the stream function is $\psi = \Delta^{-1}\omega= C\ln |z|$, and hence the fluid velocity $v$ 
has the property $|v|=|{\rm sgrad}\,\psi| \sim 1/|z|$ at the distance $|z|$ from the origin.

While this explains the cross-ratio part of the  theorem, the appearance of the 
 golden ratio in the interaction parameter in this hydrodynamical context of Helmholtz singular vortices remains elusive.

\bigskip

 {\bf Acknowledgments.}
We are indebted to Klas Modin and Cheng Yang for helpful discussions.
This work was partially supported by an NSERC research grant.




\end{document}